\documentclass{article}

\usepackage{amsxtra}
\usepackage{amsfonts}
\usepackage[latin1]{inputenc}
\usepackage{graphicx}
\usepackage{amsmath,amssymb,latexsym,amsthm,mathrsfs}
\usepackage[all]{xy}

\newtheorem{theorem}{Theorem}[section]

\newtheorem{proposition}[theorem]{Proposition}

\newtheorem{conjecture}[theorem]{Conjecture}



  \newcommand{\bC}{\mathbb{C}}   \newcommand{\bF}{\mathbb{F}}          
         \newcommand{\bZ}{\mathbb{Z}}

\newcommand{\sma}{\wedge} 

\newcommand{\TR}{\textnormal{TR}}
\newcommand{\TC}{\textnormal{TC}}


\title{On the algebraic $K$-theory of the coordinate axes over the integers}
\author{Vigleik Angeltveit\thanks{Vigleik Angeltveit was partially supported by an NSF All-Institutes Postdoctoral Fellowship administered by the Mathematical Sciences Research Institute through its core grant DMS-0441170} \, and Teena Gerhardt\thanks{Teena Gerhardt was partially supported by NSF Grant DMS-1007083}}




\begin{document}

\maketitle

\begin{abstract}
We show that the relative algebraic $K$-theory group $K_{2i}(\bZ[x,y]/(xy), (x,y))$ is free abelian of rank $1$ and that $K_{2i+1}(\bZ[x,y]/(xy), (x,y))$ is finite of order $(i!)^2$. We also find the group structure of $K_{2i+1}(\bZ[x,y]/(xy), (x,y))$ in low degrees.
\end{abstract}

\section{Introduction}
In general algebraic $K$-theory groups are difficult to compute, and relatively few computations exist in the literature. More than 30 years ago the relative algebraic $K$-theory group $K_q(\mathbb{Z}[x,y]/(xy), (x,y))$ was considered by Dennis and Krusemeyer \cite{DeKr79}, who computed this group when $q=2$.  About 20 years ago, work of Geller, Reid, and Weibel \cite{GeReWe89} showed that for every nonnegative integer $q$ the abelian group $K_q(\mathbb{Z}[x,y]/(xy), (x,y))$ has rank $0$ if $q$ is odd and $1$ if $q$ is even. We prove the following more precise result:

\begin{theorem} \label{thm:main}
For any $i \geq 0$
\begin{enumerate}
\item The abelian group $K_{2i}(\mathbb{Z}[x,y]/(xy), (x,y))$ is free of rank $1$.
\item The abelian group $K_{2i+1}(\mathbb{Z}[x,y]/(xy), (x,y))$ is finite of order $(i!)^2$.
\end{enumerate}
\end{theorem}
In \cite{He07}, Hesselholt evalulated the algebraic $K$-groups $K_*(k[x,y]/(xy), (x,y))$ when $k$ is a regular $\bF_p$-algebra in terms of the big de\,Rham-Witt forms of $k$. Here we use a similar approach but exploit new equivariant homotopy computations for the topological Hochschild homology of the integers to obtain the results in Theorem \ref{thm:main}. 

For a ring $A$, the cyclotomic trace map \cite{BHM}, $trc: K(A) \rightarrow \TC(A),$ relates the algebraic $K$-theory of $A$ to the topological cyclic homology of $A$. This map is often close to an equivalence \cite{HeMa97, Mc97, GeHe06}, so in good cases the computation of algebraic $K$-theory, $K_q(A)$ can be reduced to the computation of topological cyclic homology, $\TC_q(A)$. 

Topological cyclic homology is defined by looking at fixed points of topological Hochschild homology, $T(A)$. The circle $S^1$ acts on $T(A)$, and we define $\TR^n(A;p) = T(A)^{C_{p^{n-1}}}$ to be the fixed point spectrum under the action of the cyclic group of order $p^{n-1}$ considered as a subgroup of $S^1$. To compute topological cyclic homology it is essential to compute the homotopy groups of these spectra:
\[
\TR^n_q(A;p) = \pi_q(T(A)^{C_{p^{n-1}}}) =  [S^q \wedge S^1/C_{p^{n-1}+}, T(A)]_{S^1}.
\]
In the case where the ring in question is a pointed monoid algebra, $A(\Pi)$, we take advantage of the following equivalence of $S^1$-spectra
\[
T(A(\Pi)) \simeq T(A) \wedge B^{cy}(\Pi).
\] 
Here $B^{cy}(\Pi)$ denotes the cyclic bar construction on the pointed monoid $\Pi$. Then the TR-groups relevant to the calculation of $K_*(A(\Pi))$ are those of the form
\[
\TR^n_q(A(\Pi);p) = \pi_q(T(A(\Pi))^{C_{p^{n-1}}})= [S^q \wedge S^1/C_{p^{n-1}+}, T(A)\wedge B^{cy}(\Pi)]_{S^1}.
\]
If one can understand how $B^{cy}(\Pi)$ is built out of $S^1$-representation spheres, this gives a formula for these TR-groups in terms of groups of the form
\[
\TR^n_{q-\lambda}(A;p) = \pi_{q-\lambda}(T(A)^{C_{p^{n-1}}}) = [S^q \wedge S^1/C_{p^{n-1}+}, T(A)\wedge S^{\lambda}]_{S^1}.
\]
In other words, the computation of the ordinary ($\mathbb{Z}$-graded) TR-groups of $A(\Pi)$ can be reduced to the computation of the $RO(S^1)$-graded TR-groups of $A$. If these $RO(S^1)$-graded TR-groups can be computed, then in good situations information about the $K$-theory groups, $K_*(A(\Pi))$ can be recovered using the cyclotomic trace. 

To prove Theorem \ref{thm:main} we apply the approach outlined above as follows. We first use the cyclotomic trace map to relate the $K$-groups in question to certain birelative topological cyclic homology groups. Work of Hesselholt \cite[Theorem B]{He07} gives a formula for these topological cyclic homology groups in terms of $RO(S^1)$-graded TR-groups of $\bZ$, $\TR^n_{q-\lambda}(\mathbb{Z};p)$. We recall this formula in Section \ref{s:TR}. Thus the $K$-groups we are studying can be written in terms of these $RO(S^1)$-graded TR-groups. 

The proof of the main theorem is then reduced to computing TR-groups of the form $\TR^n_{q-\lambda}(\bZ;p)$. In Section \ref{s:proof} we make the computations necessary to finish the proof. 

Although we are unable to determine the group structure of the odd $K$-groups in general, in Section \ref{s:low} we compute the odd $K$-groups in some low degrees.

\section{From $K$-theory to TR-theory} \label{s:TR}

We begin by reducing the proof of the main theorem to the computation of certain $RO(S^1)$-graded TR-groups. To begin we would like to have a comparison theorem relating algebraic $K$-theory to topological cyclic homology via the cyclotomic trace, as described in the Introduction. Often such comparison theorems relate relative $K$-theory and relative topological cyclic homology. In this case, we need to consider birelative groups in order to get the desired comparison theorem. We first recall how the relevant birelative groups are defined.

For $A= k[x,y]/(xy),$ $B=k[x] \times k[y]$ the normalization of $A$, and $I = (x,y)$ the augmentation ideal, one could consider the diagram:
\begin{equation}\label{square} \xymatrix{ K(A) \ar[r] \ar[d] & K(A/I) \ar[d] \\ K(B) \ar[r] & K(B/I)} \end{equation}
The birelative algebraic $K$-theory group $K(A,B,I)$ is defined to be the iterated mapping fiber of this diagram. The analogous construction for topological cyclic homology would yield birelative TC.   There is a comparison theorem due to Geisser and Hesselholt \cite[Theorem 1]{GeHe06}, which says that the cyclotomic trace map induces an isomorphism
\[ K_q(A,B,I;\bZ/p^v) \to \TC_q(A,B,I;p,\bZ/p^v) \]
for each prime $p$ and any $v \geq 1$. For our computations we are interested in the relative $K$-group $K_q(A,I)$. Note that for a regular ring $k$ the bottom horizontal map in diagram \ref{square}, $K(k[x] \times k[y]) \rightarrow K(k \times k)$, is a weak equivalence by the Fundamental Theorem of $K$-theory. It follows that the relative group $K_q(A,I)$ is isomorphic to the birelative group $K_q(A,B,I)$. So, the above comparison theorem allows us to compare the group $K_q(A,I)$ to a topological cyclic homology group. We now focus our attention on computing $\TC_q(A,B,I;p,\bZ/p^v)$, when $k=\mathbb{Z}$.

We note that $\mathbb{Z}[x,y]/(xy)$ is a pointed monoid algebra.  We consider the pointed monoid $\Pi = \{0, 1, x, x^2, \dots, y, y^2, \ldots\}$.  Then $\mathbb{Z}[x,y]/(xy) = \mathbb{Z}(\Pi)$. As outlined in the Introduction, in order to study the algebraic $K$-theory of $\mathbb{Z}(\Pi)$ using the $RO(S^1)$-graded TR-groups of $\mathbb{Z}$, one must first understand the $S^1$-equivariant homotopy type of $B^{cy}(\Pi)$. Hesselholt studied this equivariant homotopy type in \cite[Section 1]{He07}, which lead to a splitting of birelative topological Hochschild homology
\[ T(k[x,y]/(xy), k[x]\times k[y],(x,y)) \simeq \bigvee_{i \geq 1} T(k) \sma S^{\lambda_i} \sma \Sigma^{-1} (S^1/C_i)_+.\]
Here $S^{\lambda_i}$ is the $1$-point compactification of the real $S^1$-representation
\[ \lambda_i = \bC(1) \oplus \ldots \oplus \bC(i), \]
where $\bC(i)$ denotes the one-dimensional complex $S^1$-representation defined by $\bC(i) = \bC$ with $S^1$ acting from the left by $z\cdot w = z^iw$. 

Recall that there is an action on birelative topological Hochschild homology by $S^1$. We write $\TR^n(A, B, I;p) = T(A,B,I)^{C_{p^{n-1}}}$  for the fixed point spectrum under the action of $C_{p^{n-1}} \subset S^1$, the cyclic group of order $p^{n-1}$. These TR-spectra are connected by maps $R, F$, and $V$. The birelative topological cyclic homology spectrum $\TC(A, B, I;p)$ is formed by taking a homotopy limit over $R, F: \TR^n(A, B, I;p) \to \TR^{n-1}(A, B, I;p)$. Hesselholt's splitting yields the following formula:
\[ \TC_q(k[x,y]/(xy),k[x]\times k[y],(x,y);p) \cong \prod_{p \nmid d} \lim_R \TR^n_{q-\lambda_{p^{n-1} d}}(k;p).\]
Hesseholt was studying the case when the ring $k$ is a regular $\bF_p$-algebra, but this formula is valid even without that hypothesis. 

The $K$- theory groups of $\mathbb{Z}[x,y]/(xy)$ are finitely generated. Therefore to prove the main theorem it suffices to show that $\TC_{2i}(\mathbb{Z}[x,y]/(xy),\mathbb{Z}[x]\times \mathbb{Z}[y],(x,y);p) \cong \bZ$ and $\TC_{2i+1}(\mathbb{Z}[x,y]/(xy),\mathbb{Z}[x]\times \mathbb{Z}[y],(x,y);p)_{(p)}$ has order the $p$-primary component of $(i!)^2$ for each prime $p$.

\section{Calculation of TR-groups} \label{s:proof}

In Section \ref{s:TR} above, we reduced the proof of the main theorem to the proof of the following proposition. 
\begin{proposition}\label{prop:both}
Let $n$ be a positive integer and let $i$ be a non-negative integer. Then for every prime number $p$: \\
(i) The $\mathbb{Z}_{(p)}$-module  $\prod \lim_R \TR^n_{2i-\lambda_{p^{n-1}d}}(\mathbb{Z};p)_{(p)}$ has rank 1. \\
(ii) The $\mathbb{Z}_{(p)}$-module  $\prod \lim_R \TR^n_{2i+1-\lambda_{p^{n-1}d}}(\mathbb{Z};p)_{(p)}$ has finite length $v_p((i!)^2)$. \\
Here both products are taken over the set of positive integers which are not divisible by $p$.
\end{proposition}

\begin{proof}
We fix a prime $p$ and let $I_p$ denote the set of positive integers not divisible by $p$.  Given a virtual representation $\alpha \in RO(S^1)$, there is a prime operation defined by $\alpha' = \rho_p^* \alpha^{C_p}$ where $\rho_p: S^1 \rightarrow S^1/C_p$ is the isomorphism given by the $p$th root. There is a fundamental long exact sequence for $RO(S^1)$-graded TR-groups \cite[Theorem 2.2]{HeMa97}:
$$
\xymatrix{
\cdots \ar[r] & \pi_{q-\lambda}T(\mathbb{Z})_{hC_{p^n}} \ar[r]  & \TR^n_{q-\lambda}(\mathbb{Z};p)  \ar[r]^R  & \TR^{n-1}_{q-\lambda'} (\mathbb{Z};p)  \ar[r]  &\cdots
}
$$ 
Here $T(\mathbb{Z})_{hC_{p^n}} = (ES^1_+ \wedge
T(\mathbb{Z}))^{C_{p^{n-1}}}$ is the homotopy orbit spectrum. There is a first quadrant spectral sequence 
$$
{E}^2_{s, t}= {H}_s(C_{p^{n-1}}, \pi_t(T(\mathbb{Z}) \wedge S^{\lambda})) \Rightarrow \pi_{s+t-\lambda}T(\mathbb{Z})_{hC_{p^n}}.
$$
It follows that $\pi_{q-\lambda}T(\mathbb{Z})_{hC_{p^n}} = 0$ when $q < \dim_{\mathbb{R}}(\lambda)$. Therefore the map
\[
R: \TR^n_{q -\lambda}(\mathbb{Z};p) \rightarrow \TR^n_{q - \lambda'}(\mathbb{Z};p)
\]
is an isomorphism when $q< \dim_{\mathbb{R}}(\lambda)$. Hence, in computing the limit
$$
\lim_R \TR^n_{q-\lambda_{p^{n-1}d}}(\mathbb{Z};p)
$$ we find that the maps become isomorphisms when $n$ is large enough that $q<\dim(\lambda_{p^{n-1}d}) = 2p^{n-1}d.$ If $s$ is the smallest integer such that $q< 2p^sd$, then
$$
\lim_R \TR^n_{q-\lambda_{p^{n-1}d}}(\mathbb{Z};p) \cong \TR^{s+1}_{q-\lambda_{p^{s}d}}(\mathbb{Z};p) 
$$
The restriction map $R:  \TR^{s+1}_{q-\lambda_{p^{s}d}}(\mathbb{Z};p)\rightarrow \TR^s_{q-\lambda_{p^{s-1}d}}(\mathbb{Z};p)$ is an isomorphism. Hence, the canonical projection
\[
\lim_R \TR^n_{q-\lambda_{p^{n-1}d}}(\mathbb{Z};p) \rightarrow \TR^s_{q-\lambda_{p^{s-1}d}}(\mathbb{Z};p)
\]
is an isomorphism for the unique $s$ with $2p^{s-1}d \leq q <2 p^s d$. So, we compute the group $\TR^s_{q-\lambda_{p^{s-1}d}}(\mathbb{Z};p)$ for this $s$.

We first we consider the TR-groups graded by even-dimensional representations. By \cite[Theorem B]{AnGeHe}, the group $\TR^s_{2i-\lambda_{p^{s-1} d}}(\bZ;p)$ is torsion free of rank equal to the number of integers $0 \leq m <s$ such that $i=\dim_\bC(\lambda_{p^{s-1} d}^{C_{p^m}})$. Since $\lambda_{p^{s-1}d}^{C_{p^m}} = \lambda_{p^{s-1-m}d}$, the group in question is torsion free of rank equal to the number of integers $0\leq m < s$ such that $i=p^{s-1-m}d$. Hence, the abelian group $\lim_R \TR^n_{2i-\lambda_{p^{r-1} d}}(\bZ;p)$, for $p$ not dividing $d$, is free of rank $1$ if $i=p^n d$ for some $n \geq 0$ and zero otherwise. It follows that the abelian group
\[ \prod_{d\in I_p} \lim_R \TR^n_{2i-\lambda_{p^{n-1} d}}(\bZ;p) \]
is free of rank $1$. This proves part (i) of Proposition \ref{prop:both}. 

We now prove part (ii) of the proposition. As above, $\lim_R \TR^n_{2i+1-\lambda_{p^{n-1}d}}(\mathbb{Z};p)$ is isomorphic to $ \TR^s_{2i+1-\lambda_{p^{s-1}d}}(\mathbb{Z};p)$ for the unique $s$ with $2p^{s-1}d \leq q <2 p^s d$. By \cite[Theorem B]{AnGeHe}, for any $n>0$,
\begin{multline*}
 |\TR^n_{2i+1-\lambda_{p^{n-1}d}}(\mathbb{Z};p)| = |\TR^{n-1}_{2i+1-\lambda_{p^{n-1}d}^{C_p}}(\mathbb{Z};p)| p^{n-1}(i+1 - \dim_{\mathbb{C}}(\lambda_{p^{n-1}d})). \\
   = |\TR^{n-1}_{2i+1-\lambda_{p^{n-2}d}}(\mathbb{Z};p)| p^{n-1}(i+1 - p^{n-1}d).
 \end{multline*}
Using this result and induction we find that
\[ | \TR^s_{2i+1-\lambda_{p^{s-1} d}}(\bZ;p) | = \prod_{0 \leq k \leq s-1} p^{k} \cdot \prod_{0 \leq k \leq s-1} (i+1-p^{k} d).
\]
Hence
\[
\textup{length}_{\mathbb{Z}_{(p)}}  \lim_R \TR^n_{2i+1-\lambda_{p^{n-1}d}}(\mathbb{Z};p)_{(p)} =  \displaystyle\sum_{0\leq k \leq s-1} (k + v_p(i+1 - p^{k}d)).
\]
Taking the product over $d$ not divisible by $p$, and recalling that $p^{s-1}d \leq i < p^sd$,  we find that the length of the $\mathbb{Z}_{(p)}$-module  $\prod \lim_R \TR^n_{2i+1-\lambda_{p^{n-1}d}}(\mathbb{Z};p)_{(p)}$ is 
\begin{multline*}
\displaystyle\sum_{\stackrel{1 \leq d \leq i}{d\in I_p}}\displaystyle\sum_{0\leq k \leq s-1} (k + v_p(i+1 - p^{k}d)) = \displaystyle\sum_{1 \leq n \leq i} (v_p(n)+ v_p(i+1 - n)) 
= 2\displaystyle\sum_{1 \leq n \leq i} v_p(n) 
\end{multline*}
Hence, the $\mathbb{Z}_{(p)}$-module 
\[ \prod_{d \in I_p} \lim_R \TR^n_{2i+1-\lambda_{p^{n-1} d}}(\bZ;p)_{(p)} \]
has finite length $v_p((i!)^2)$.

\end{proof}
This finishes the proof of Theorem \ref{thm:main}.

\section{Low-dimensional calculations} \label{s:low}
Now we turn to explicit computations of the $K$-groups in low degrees. We find the following:

\begin{theorem} \label{thm:lowdim}
Let $A=\bZ[x,y]/(xy)$ and let $I=(x,y)$. Then
\begin{eqnarray*}
K_1(A,I) & = & 0 \\
K_3(A,I) & = & 0 \\
K_5(A,I) & \cong & \bZ/4 \\
K_7(A,I) & \cong & \bZ/2 \oplus \bZ/2 \oplus \bZ/9 \\
K_{11}(A,I) & \cong & \bZ/2 \oplus \bZ/32 \oplus \bZ/9 \oplus \bZ/25 \\
\end{eqnarray*}
\end{theorem}

\begin{proof}
It follows immediately from Theorem \ref{thm:main} that $K_1(A,I)=K_3(A,I)=0$. For the rest of the proof we will freely use \cite[Theorem 1.4]{AnGe}, which tells us the number of summands of each $\TR$-group, and \cite[Theorem B]{AnGeHe}, which tells us the order of each $\TR$-group.

%
%

We find that $K_5(A,I) \cong \TR^2_{5-\lambda_2}(\bZ;2)$, which has order $4$ and one summand. Hence $K_5(A,I)$ is as claimed. Similarly,
\[ K_7(A,I) \cong \TR^2_{7-\lambda_2}(\bZ;2) \oplus \TR^2_{7-\lambda_3}(\bZ;3).\]
The $2$-primary part has order $4$ and two summands, and the $3$-primary part has order $9$ and one summand. Hence $K_7(A,I)$ is as claimed.

For $K_{11}(A,I)$, we find that
\[ K_{11}(A,I) \cong \TR^3_{11-\lambda_4}(\bZ;2) \oplus \TR^2_{11-\lambda_3}(\bZ;3) \oplus \TR^2_{11-\lambda_5}(\bZ;5).\]
We find that $\TR^2_{11-\lambda_3}(\bZ;3) \cong \bZ/9$ and $\TR^2_{11-\lambda_5}(\bZ;5) \cong \bZ/25$, while $\TR^3_{11-\lambda_4}(\bZ;2)$ is identified in Proposition \ref{p:TR11} below.

\end{proof}


To compute $\TR^3_{11-\lambda_4}(\bZ;2)$, we will use the Tate spectral sequence. Recall that we have the following fundamental diagram of horizontal long exact sequences \cite[Equation 49]{HeMa97}:

\begin{equation*}
\xymatrix{ \ldots \ar[r] & \pi_{q-\lambda} T_{hC_{p^n}} \ar[r]^-N \ar[d]^= & \TR^{n+1}_{q-\lambda} \ar[r]^-R \ar[d]^{\Gamma_n} & \TR^n_{q-\lambda'} \ar[d]^{\hat{\Gamma}_n} \ar[r] & \ldots \\
\ldots \ar[r] & \pi_{q-\lambda} T_{hC_{p^n}} \ar[r]^-{N^h} & \pi_{q-\lambda} T^{hC_{p^n}} \ar[r]^-{R^h} & \pi_{q-\lambda} T^{t C_{p^n}} \ar[r] & \ldots }
\end{equation*}

Here $T=T(\bZ)$ is the topological Hochschild spectrum, $T^{tC_{p^n}}$ is the Tate spectrum, $T^{hC_{p^n}}$ is the homotopy fixed point spectrum, $T_{hC_{p^n}}$ is the homotopy orbit spectrum, and $\TR^n=\TR^n(\bZ;p)$. As earlier, \cite{AnGe}, $\lambda'=\rho_p^*(\lambda^{C_p})$ where $\rho_p : S^1 \to S^1/C_p$ is the $p$'th root map. By Tsalidis' Theorem \cite[Theorem 2.4]{Ts98} extended to the $RO(S^1)$-graded context \cite[Addendum 9.1]{HeMa97}, the maps $\Gamma_n$ and $\hat{\Gamma}_n$ are isomorphisms for $q \geq 2\dim_\bC(\lambda')$.


The Tate spectral sequence, which computes $\pi_{*-\lambda} T^{tC_{p^n}}$, has $E_2$ term
\[ \hat{E}_2^{s,t} = \widehat{H}^s(C_{p^n}; \pi_{t-\lambda} T(\bZ)).\]
Here $\pi_{t-\lambda} T(\bZ) \cong \pi_{t-2\dim_\bC(\lambda)} T(\bZ)$. By restricting to the second quadrant, we get a corresponding spectral sequence which computes $\pi_{*-\lambda} T^{hC_{p^n}}$. These spectral sequences were studied in detail with mod $p$ coefficients in \cite{AnGe}. While understanding the Tate spectral sequence with integral coefficients remains an extremely difficult problem, an essential ingredient in the proof of \cite[Theorem B]{AnGeHe} is that all non-zero differentials go from even to odd total degree. 
A partial understanding of the $C_4$-Tate spectral sequence with mod $4$ coefficients will be enough to compute $\TR^3_{11-\lambda_4}(\bZ;2)$.




\begin{proposition} \label{p:TR11}
We have
\[ \TR^3_{11-\lambda_4}(\bZ;2) \cong \bZ/2 \oplus \bZ/32.\]
\end{proposition}

\begin{proof}
We know that $\TR^3_{11-\lambda_4}(\bZ;2)$ has order $2^6$ and consists of two summands. Hence it is enough to show that $\TR^3_{11-\lambda_4}(\bZ;2,\bZ/4)$ has order $8$. We do this by studying the Tate spectral sequence converging to $\pi_{*-\lambda_4}(T(\bZ)^{tC_4};\bZ/4)$ and restricting to the second quadrant.

We start by computing $\TR^2_{11-\lambda_2}(\bZ;2) \cong \bZ/8$, which means that
  \[ \pi_{11-\lambda_4}(T(\bZ)^{tC_4};\bZ/4) \cong \bZ/4.\]
When restricting to the second quadrant (which in this case means filtration less than or equal to $8$) we pick up one extra class for each differential entering the second quadrant. The only possible classes that can support such differentials with target in total degree $11$ are $t^{-6}$, $2t^{-6}$ and $t^{-5} u_2 \lambda_1$.

There is a differential $d_4(t^{-7} u_2)=t^{-5} u_2 \lambda_1$, so $t^{-5} u_2 \lambda_1$ does not give us an extra class. (In the corresponding Tate spectral sequence with integral coefficients the class $t^{-7} u_2$ does not exist, and $t^{-5} u_2 \lambda_1$ supports a differential giving an extra integral class.) We also know that there is a differential $d_{12}(t^{-6})=\lambda_1 \mu_1^2$. Hence it suffices to show that $2t^{-6}$ is a permanent cycle. (Again, $2t^{-6}$ supports a longer differential in the corresponding Tate spectral sequence with integral coefficients.)

The $C_2$-Tate spectral sequence with mod $4$ coefficients has been worked out by Rognes, and is (mostly) described in \cite[Fig.\ 4.3]{Ro99b}. In particular $t^{-4}$ is a permanent cycle. The $RO(S^1)$-graded $C_2$-Tate spectral sequence is a shifted copy of the integral one, and it follows that $t^{-6}$ is a permanent cycle in the spectral sequence converging to $\pi_{*-\lambda_4}(T(\bZ)^{tC_2};\bZ/4)$.

Next we use that the Verschiebung (or transfer) map $V : T(\bZ)^{tC_2} \to T(\bZ)^{tC_4}$ gives a map of spectral sequences, and that $V(t^{-6})=2t^{-6}$. It follows that $2t^{-6}$ is a permanent cycle in the spectral sequence converging to $\pi_{*-\lambda_4}(T(\bZ)^{tC_4};\bZ/4)$, which was what we needed to show.

 \end{proof}

We also conjecture the computation of two other $K$-theory groups.

\begin{conjecture} \label{conj:lowdim}
Let $A=\bZ[x,y]/(xy)$ and let $I=(x,y)$. Then
\begin{eqnarray*}
K_9(A,I) & \cong & \bZ/2 \oplus \bZ/2 \oplus \bZ/16 \oplus \bZ/3 \oplus \bZ/3 \\
K_{13}(A,I) & \cong & \bZ/2 \oplus \bZ/2 \oplus \bZ/8 \oplus \bZ/8 \oplus \bZ/3 \oplus \bZ/3 \oplus \bZ/9 \oplus \bZ/5 \oplus \bZ/5 \\
\end{eqnarray*}
\end{conjecture}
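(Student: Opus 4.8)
The plan is to carry the argument behind Theorems~\ref{thm:main} and~\ref{thm:lowdim} one step further. As in the proof of Proposition~\ref{prop:odds}, for $q=2i+1$ the group $K_q(A,I)$ is finite and the Geisser--Hesselholt theorem together with Hesselholt's formula of Section~\ref{s:TR} identifies it with
\[ K_{2i+1}(A,I)\ \cong\ \bigoplus_{p}\ \TC_{2i+1}(A,B,I;p)_{(p)}\ \cong\ \bigoplus_{p}\ \bigoplus_{p\nmid d}\ \TR^r_{2i+1-\lambda_{p^{r-1}d}}(\bZ;p)_{(p)}, \]
where for each pair $(p,d)$ the integer $r$ is the unique one with $p^{r-1}d\le i<p^r d$ by \cite[Theorem~B]{AnGeHe}, and the sum is finite because only the $d\le i$ contribute. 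So the first step, for $i=4$ and $i=6$, is to enumerate these pairs and to record, via \cite[Theorem~B]{AnGeHe} and \cite[Theorem~1.1]{AnGe}, the order and the number of cyclic summands of each resulting $\TR$-group. Every pair with $r=1$ has $\TR^1_{2i+1-\lambda_d}(\bZ;p)=\pi_{2i+1-2d}T(\bZ)$, which is $\bZ/(i-d+1)$ localized at $p$ by B\"okstedt's computation of $\pi_*T(\bZ)$; and whenever order together with number of summands already pins down a group---order $p$, order $p^2$ with two summands, order $p^3$ with one summand, and so on---there is nothing further to do. The whole content is therefore concentrated in a handful of $\TR$-groups of order $p^n$ with $n\ge 3$ and fewer than $n$ summands.

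Carrying this out for $K_9(A,I)$, that is $i=4$, yields
\[ K_9(A,I)\ \cong\ \TR^3_{9-\lambda_4}(\bZ;2)_{(2)}\ \oplus\ \bZ/2\ \oplus\ \bZ/3\ \oplus\ \bZ/3, \]
where the $\bZ/2$ is $\TR^1_{9-\lambda_3}(\bZ;2)$ and the two copies of $\bZ/3$ are $\TR^1_{9-\lambda_2}(\bZ;3)$ and $\TR^2_{9-\lambda_3}(\bZ;3)_{(3)}$ (which has order $3$); all three are forced. The one undetermined piece is $\TR^3_{9-\lambda_4}(\bZ;2)_{(2)}$, which has order $2^5$ and two summands, hence is $\bZ/2\oplus\bZ/16$ or $\bZ/4\oplus\bZ/8$; we must rule out the latter. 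Since $\TR^3=T(\bZ)^{C_4}$ this is exactly the situation of Proposition~\ref{p:TR11}: one studies the $C_4$-Tate spectral sequence converging to $\pi_{*-\lambda_4}(T(\bZ)^{tC_4};\bZ/4)$, restricts to the second quadrant, and shows that $\TR^3_{9-\lambda_4}(\bZ;2,\bZ/4)$ has order $8$---the two candidate groups have mod-$4$ reductions of orders $8$ and $16$, while the adjacent even group $\TR^3_{8-\lambda_4}(\bZ;2)$ is torsion free and contributes no Tor summand. As in Proposition~\ref{p:TR11}, the relevant permanent cycles are produced by comparison with Rognes's $C_2$-Tate spectral sequence along the Verschiebung $V\colon T(\bZ)^{tC_2}\to T(\bZ)^{tC_4}$, tracking the classes $t^{-n}$ and $2t^{-n}$.

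For $K_{13}(A,I)$, that is $i=6$, the same bookkeeping produces
\[ K_{13}(A,I)\ \cong\ \TR^3_{13-\lambda_4}(\bZ;2)_{(2)}\ \oplus\ \bZ/8\ \oplus\ \bZ/2\ \oplus\ \bZ/9\ \oplus\ \bZ/3\ \oplus\ \bZ/3\ \oplus\ \bZ/5\ \oplus\ \bZ/5, \]
where $\bZ/8=\TR^2_{13-\lambda_6}(\bZ;2)_{(2)}$ (order $8$, one summand), $\bZ/9=\TR^2_{13-\lambda_3}(\bZ;3)_{(3)}$ (order $9$, one summand), and the remaining five summands come from the pairs $(2,5),(3,2),(3,4),(5,1),(5,2)$, each being either a homotopy group of $T(\bZ)$ or a $\TR^2$-group of prime order; all seven are forced by \cite[Theorem~B]{AnGeHe}, \cite[Theorem~1.1]{AnGe} and B\"okstedt's computation. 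The only genuinely undetermined piece is $\TR^3_{13-\lambda_4}(\bZ;2)_{(2)}$, of order $2^4$ with two summands, which must be identified as $\bZ/2\oplus\bZ/8$ rather than $\bZ/4\oplus\bZ/4$; again this is a $C_4$-Tate spectral sequence computation of the kind in Proposition~\ref{p:TR11}, reducing to the statement that $\TR^3_{13-\lambda_4}(\bZ;2,\bZ/4)$ has order $8$.

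The main obstacle is the one flagged just before Proposition~\ref{p:TR11}: the integral and mod-$2^v$ $C_4$-Tate spectral sequences for $T(\bZ)$ are only partially understood, and the argument must determine, for the classes in the second quadrant that can reach total degrees $9$ and $13$, which differentials they support or receive---and in particular whether classes such as $2t^{-n}$ are permanent cycles. The structural inputs that make this feasible in these degrees are Tsalidis' theorem (so that $\TR^3$ agrees with the relevant homotopy fixed points), the fact proved in \cite{AnGeHe} that every nonzero differential runs from even to odd total degree, and the compatibility of the Tate spectral sequences under the Verschiebung; turning these into a complete proof, and checking that the summand counts of \cite{AnGe} are as the conjectured answer requires in these degrees, is where the real effort lies. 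Mod-$2$ information will not suffice: distinguishing $\bZ/2\oplus\bZ/16$ from $\bZ/4\oplus\bZ/8$ and $\bZ/2\oplus\bZ/8$ from $\bZ/4\oplus\bZ/4$ each genuinely needs the mod-$4$ computation.
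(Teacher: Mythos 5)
Your reduction is exactly the paper's: the same enumeration of pairs $(p,d)$ gives the same eight (resp.\ four) $\TR$-summands, the same groups are forced by the order and summand counts of \cite{AnGeHe} and \cite{AnGe}, and everything comes down to identifying $\TR^3_{9-\lambda_4}(\bZ;2)$ and $\TR^3_{13-\lambda_4}(\bZ;2)$, each of which is one of two possible groups. Where you diverge is in how you propose to settle those two: you want to run the mod~$4$ argument of Proposition~\ref{p:TR11} again, i.e.\ show that $\TR^3_{9-\lambda_4}(\bZ;2,\bZ/4)$ and $\TR^3_{13-\lambda_4}(\bZ;2,\bZ/4)$ each have order $8$ (which, as you note, suffices because the adjacent even-degree groups are torsion free, so there is no Tor correction). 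The paper instead works with the integral $C_4$-Tate spectral sequence, conjectures the pattern of surviving classes in total degrees $9$, $10$, $12$ and $13$, and detects the hidden multiplication-by-$2$ extensions by comparing where classes such as $t^4u_2\lambda_1\mu_1^4$ land integrally versus mod~$2$. Your route is cleaner in that it only asks for an order, not for extension data; but be aware that the reason these are stated as conjectures rather than theorems is precisely that, in total degrees $9$ and $13$, the longer differentials in the $C_4$-Tate spectral sequence (integral or with mod~$2^v$ coefficients) have not been ruled out from behaving in a more complicated way --- the degree~$11$ case of Proposition~\ref{p:TR11} succeeds because every class that could interfere is explicitly accounted for there, and the analogous accounting in degrees $9$ and $13$ is exactly what is missing. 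You flag this honestly at the end, so your proposal has the same conjectural status as the paper's sketch; it does not close the gap, and no argument relying only on the $C_2$-comparison along $V$ and the even-to-odd constraint on differentials is currently known to do so.
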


\begin{proof}[Sketch proof]
This conjecture is based on a conjectural understanding of the $C_4$-Tate spectral sequence with integral coefficients. We compute that
\[ K_9(A,I) \cong \TR^3_{9-\lambda_4}(\bZ;2) \oplus \TR^1_{9-\lambda_3}(\bZ;2) \oplus \TR^2_{9-\lambda_3}(\bZ;3) \oplus \TR^1_{9-\lambda_2}(\bZ;3).\]
We find that $\TR^1_{9-\lambda_3}(\bZ;2) \cong \bZ/2$, $\TR^2_{9-\lambda_3}(\bZ;3) \cong \bZ/3$ and $\TR^1_{9-\lambda_2}(\bZ;3) \cong \bZ/3$. The final group, $\TR^3_{9-\lambda_4}(\bZ;2)$, is conjecturally computed in Conjecture \ref{c:TR9} below.

Similarly we compute
\begin{eqnarray*}
K_{13}(A,I) \cong \TR^3_{13-\lambda_4}(\bZ;2) \oplus \TR^2_{13-\lambda_6}(\bZ;2) \oplus \TR^1_{13-\lambda_5}(\bZ;2) \oplus \TR^2_{13-\lambda_3}(\bZ;3) \\
\oplus \TR^2_{13-\lambda_6}(\bZ;3) \oplus \TR^1_{13-\lambda_4}(\bZ;3) \oplus \TR^2_{13-\lambda_5}(\bZ;5) \oplus \TR^1_{13-\lambda_2}(\bZ;5).
\end{eqnarray*}
We conjecture the group structure of $\TR^3_{13-\lambda_4}(\mathbb{Z};2)$ in Conjecture \ref{c:TR13} below, and the other groups are readily computed.
\end{proof}


\begin{conjecture} \label{c:TR9}
 We have
 \[ \TR^3_{9-\lambda_4}(\bZ;2) \cong \bZ/2 \oplus \bZ/16.\]
 \end{conjecture}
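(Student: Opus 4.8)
The plan is to mimic the strategy used in the proof of Proposition \ref{p:TR11}, replacing the degree $11$ computation by the corresponding one in degree $9$. By \cite[Theorem 1.1]{AnGe} and \cite[Theorem B]{AnGeHe} we already know that $\TR^3_{9-\lambda_4}(\bZ;2)$ has order $2^5$ and consists of two cyclic summands, so the group is either $\bZ/2 \oplus \bZ/16$ or $\bZ/4 \oplus \bZ/8$. To distinguish these it suffices to compute $\TR^3_{9-\lambda_4}(\bZ;2,\bZ/4)$: the first group has mod $4$ reduction of order $8$ (namely $\bZ/2 \oplus \bZ/4$), while the second has mod $4$ reduction of order $16$. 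So the entire problem reduces to showing that $\TR^3_{9-\lambda_4}(\bZ;2,\bZ/4)$ has order $8$, and this is where the conjectural input enters.

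First I would run the fundamental long exact sequence together with Tsalidis' theorem, exactly as in Proposition \ref{p:TR11}: for $q=9$ and $\lambda=\lambda_4$ we have $q > 2\dim_\bC\lambda_4 = 8$, so $\Gamma_n$ is an isomorphism and it is enough to study the second quadrant (filtration $\leq 8$) of the $C_4$-Tate spectral sequence converging to $\pi_{9-\lambda_4}(T(\bZ)^{tC_4};\bZ/4)$. I would start from the known computation of $\TR^2_{9-\lambda_2}(\bZ;2)$ (equivalently the $C_2$-fixed points), which pins down the contribution of $\pi_{9-\lambda_4}(T(\bZ)^{tC_2};\bZ/4)$, and then, as in the $\lambda_4$, degree $11$ case, count the extra classes picked up from differentials entering the second quadrant with target in total degree $9$. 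One then identifies the finite list of classes in filtration exactly $9$ or $10$ (the relevant powers of $t^{-1}$ and the classes involving $u_2$ and $\lambda_1$) that could support such differentials, and determines for each whether it is hit, supports a differential, or is a permanent cycle.

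The key technical step, just as before, is a transfer (Verschiebung) argument: the map $V : T(\bZ)^{tC_2} \to T(\bZ)^{tC_4}$ is a map of spectral sequences, $V$ multiplies the relevant class $t^{-k}$ by $2$, and Rognes' computation of the $C_2$-Tate spectral sequence with mod $4$ coefficients \cite[Fig.\ 4.3]{Ro99b} (together with the fact that the $RO(S^1)$-graded version is a shifted copy of the integral one) shows the relevant power of $t^{-1}$ is a permanent cycle downstairs. Pushing this forward via $V$ shows the doubled class is a permanent cycle in the $C_4$ spectral sequence, which is precisely the input needed to see that the mod $4$ group has order $8$ rather than $16$, forcing $\TR^3_{9-\lambda_4}(\bZ;2) \cong \bZ/2 \oplus \bZ/16$.

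The main obstacle — and the reason this is stated as a conjecture rather than a theorem — is that, unlike the degree $11$ case, I do not expect to be able to resolve every differential in the relevant range of the $C_4$-Tate spectral sequence with mod $4$ coefficients using only the $C_2$-computation and the transfer. There is at least one differential whose behavior depends on understanding the $C_4$-Tate spectral sequence with \emph{integral} coefficients (the same phenomenon that appears parenthetically throughout the proof of Proposition \ref{p:TR11}, where integral classes behave differently from their mod $4$ reductions), and a complete proof would require establishing the relevant portion of the integral $C_4$-Tate spectral sequence. Absent that, the computation of $\TR^3_{9-\lambda_4}(\bZ;2)$ rests on a conjectural pattern of differentials, so the result can only be stated conjecturally.
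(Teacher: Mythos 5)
Your proposal is a legitimate alternative strategy, but it is not the route the paper takes, and the comparison is worth spelling out. You reduce the problem to showing that $\TR^3_{9-\lambda_4}(\bZ;2,\bZ/4)$ has order $8$, exactly as in Proposition \ref{p:TR11}; this reduction is valid, though it silently uses that $\TR^3_{8-\lambda_4}(\bZ;2)$ is torsion free (so the Bockstein/Tor term in the coefficient sequence vanishes and the mod $4$ group really is just $G/4$) --- you should say this, since it is the only reason the order of the mod $4$ group distinguishes $\bZ/2\oplus\bZ/16$ from $\bZ/4\oplus\bZ/8$. The paper instead stays with \emph{integral} coefficients throughout: it conjecturally identifies the five classes surviving in the second quadrant of the integral $C_4$-Tate spectral sequence ($t^{-3}\lambda_1$, $t^{-1}\lambda_1\mu_1$, $t^3\lambda_1\mu_1^3$, $t^7\lambda_1\mu_1^5$, $t^{11}\lambda_1\mu_1^7$, consistent with order $2^5$) and then resolves the group structure by a hidden-extension argument: the class $t^4u_2\lambda_1\mu_1^4$ kills $2t^7\lambda_1\mu_1^5$ integrally but kills $t^{11}\lambda_1\mu_1^7$ mod $2$, which forces a hidden multiplication-by-$2$ extension and hence a $\bZ/16$ generated by $t^{-1}\lambda_1\mu_1$. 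Your approach trades that extension analysis for an order count in the mod $4$ spectral sequence; the paper's approach trades the order count for an extension analysis. Both hinge on the same unresolved input (the longer differentials in the $C_4$-Tate spectral sequence), which you correctly identify as the reason the statement is only a conjecture. Two caveats on your write-up: first, $\TR^2_{9-\lambda_2}(\bZ;2)$ computes $\pi_{9-\lambda_4}T(\bZ)^{tC_4}$ via $\hat\Gamma_2$ (not $\pi_{9-\lambda_4}T(\bZ)^{tC_2}$ as you wrote) --- it pins down the abutment of the $C_4$-Tate spectral sequence, which is what makes the ``count the extra second-quadrant classes'' step meaningful; second, your sketch stops short of naming the specific classes in total degrees $9$ and $10$ that must be analyzed, which is where all the actual work (and all the conjectural content) lives.
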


\begin{proof}[Sketch proof]
We know that $\TR^3_{9-\lambda_4}(\bZ;2)$ has order $2^5$ and consists of two summands. To compute this term we study the Tate spectral sequence converging to $\pi_{*-\lambda_4} T(\bZ)^{tC_4}$ and restrict to the second quadrant. (Which, again, means filtration less than or equal to $8$.) We start by computing $\TR^2_{9-\lambda_2}(\bZ;2) \cong \bZ/8$. Now we conjecture that the $C_4$-Tate spectral sequence in total degree $9$ and $10$ behaves as in Figure \ref{TateSS1}. The $d_4$ and $d_5$ differentials follow by comparing with the $C_2$-Tate spectral sequence, but we have not been able to rule out that the longer differentials could behave in a more complicated way. In the Tate spectral sequence converging to $\pi_{9-\lambda_4} T(\bZ)^{tC_4} \cong \bZ/8$, the following classes should survive:
 \[ \{t^{-1} \lambda_1 \mu_1, t^3 \lambda_1 \mu_1^3, t^7 \lambda_1 \mu_1^5\}. \]
These are then connected by hidden multiplication by $2$ extensions.

\begin{figure}\label{figure1}
 \begin{center}
\includegraphics[scale=0.8]{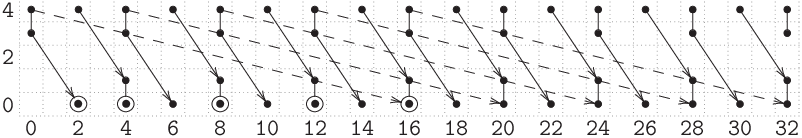} \caption{Degree $9$ and $10$ of the Tate spectral sequence converging to $\pi_{*-\lambda_4} T(\bZ)^{tC_4}$} \label{TateSS1}
 \end{center}
\end{figure}

When restricting to the second quadrant we pick up two more classes, coming from the differentials originating from $t^{-5}$ and $2t^{-5}$. So we would have the following classes, circled in Figure \ref{TateSS1}:
 \[ \{ t^{-3} \lambda_1, t^{-1} \lambda_1 \mu_1, t^3 \lambda_1 \mu_1^3, t^7 \lambda_1 \mu_1^5, t^{11} \lambda_1 \mu_1^7 \}.\]

Now consider the class $t^4 u_2 \lambda_1 \mu_1^4$. This class kills $2t^7 \lambda_1 \mu_1^5$, while in the corresponding spectral sequence with mod $2$ coefficients it kills $t^{11} \lambda_1 \mu_1^7$. This implies that we have a hidden multiplication by $2$ extension connecting the classes $t^7 \lambda_1 \mu_1^5$ and $t^{11} \lambda_1 \mu_1^7$. Hence the class $t^{-1} \lambda_1 \mu_1$ has order $16$ and $\TR^3_{9-\lambda_4}(\bZ;2) \cong \bZ/2 \oplus \bZ/{16}$.

\end{proof}


\begin{conjecture} \label{c:TR13}
 We have
 \[ \TR^3_{13-\lambda_4}(\bZ;2) \cong \bZ/2 \oplus \bZ/8.\]
\end{conjecture}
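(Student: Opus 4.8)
This conjecture again rests on a conjectural understanding of the integral $C_4$-Tate spectral sequence, and the plan is to follow the argument for Conjecture \ref{c:TR9} as closely as possible. From \cite[Theorem 1.1]{AnGe} and \cite[Theorem B]{AnGeHe} we know that $\TR^3_{13-\lambda_4}(\bZ;2)$ has order $2^4$ and splits as a sum of two cyclic groups, so it is either $\bZ/2 \oplus \bZ/8$ or $\bZ/4 \oplus \bZ/4$; the whole problem is to exclude the latter, and since the two options are distinguished modulo $4$ it is equivalent to show that $\TR^3_{13-\lambda_4}(\bZ;2,\bZ/4)$ has order $8$. Since $13 > 2\dim_\bC(\lambda_4) = 8$, the map $\Gamma_2$ is an isomorphism, so $\TR^3_{13-\lambda_4}(\bZ;2) \cong \pi_{13-\lambda_4} T(\bZ)^{hC_4}$, and this group is computed by the second-quadrant part (filtration $\leq 8$) of the Tate spectral sequence converging to $\pi_{*-\lambda_4} T(\bZ)^{tC_4}$.

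The first step is to compute $\TR^2_{13-\lambda_2}(\bZ;2) \cong \bZ/4$; since $13 > 2\dim_\bC(\lambda_2) = 4$ the map $\hat{\Gamma}_2$ is an isomorphism, so $\pi_{13-\lambda_4} T(\bZ)^{tC_4} \cong \bZ/4$. Just as in total degree $9$, this pins down the classes in total degree $13$ that survive in the full Tate spectral sequence: they are classes of the form $t^{a}\lambda_1\mu_1^{b}$, differing by multiplication by $t^4\mu_1^2$ and connected by a hidden multiplication-by-$2$ extension. The short $d_4$- and $d_5$-differentials are forced by comparison with the $C_2$-Tate spectral sequence of Rognes \cite{Ro99b}, exactly as in the earlier arguments, while restricting to the second quadrant adds one extra class for each differential entering it in total degree $13$, leaving an $E_\infty$ term of total order $2^4$.

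The remaining step, which we expect to be the main obstacle, is to resolve the multiplicative extensions among the surviving degree-$13$ classes. As in Conjecture \ref{c:TR9}, the mechanism should be a single even class of the form $t^{a}u_2\lambda_1\mu_1^{b}$ whose $d_r$-differential hits $2$ times the top surviving degree-$13$ class integrally, while its mod $2$ reduction hits the class one filtration lower; this would force a hidden multiplication-by-$2$ extension joining three of the classes into a single $\bZ/8$, with the remaining class contributing the $\bZ/2$, and hence $\TR^3_{13-\lambda_4}(\bZ;2) \cong \bZ/2 \oplus \bZ/8$. As for Conjectures \ref{c:TR9} and \ref{conj:lowdim}, the difficulty is that we do not currently control the longer differentials in the integral $C_4$-Tate spectral sequence in these degrees, nor the hidden extensions it carries; making both precise is exactly what would turn this sketch into a proof.
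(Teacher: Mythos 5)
Your overall strategy is the same as the paper's: use the known order $2^4$ and the two-summand count to reduce to distinguishing $\bZ/2\oplus\bZ/8$ from $\bZ/4\oplus\bZ/4$, compute $\TR^2_{13-\lambda_2}(\bZ;2)\cong\bZ/4$ so that $\pi_{13-\lambda_4}T(\bZ)^{tC_4}\cong\bZ/4$, restrict the Tate spectral sequence to the second quadrant, and resolve the answer by a hidden multiplication-by-$2$ extension detected by comparing an integral differential on a $u_2$-class with its mod $2$ counterpart. Your opening reduction to showing that $\TR^3_{13-\lambda_4}(\bZ;2,\bZ/4)$ has order $8$ is correct (it is the route taken for $\TR^3_{11-\lambda_4}$ in Proposition \ref{p:TR11}), but you then abandon it and argue integrally, which is what the paper does for this group.

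The genuine gap is that your sketch never instantiates the template: you do not name the surviving classes, the two extra second-quadrant classes, or the class witnessing the hidden extension, and without these the argument has no content beyond ``the same mechanism as for Conjecture \ref{c:TR9} should apply.'' Concretely, the paper's sketch identifies the survivors of the full Tate spectral sequence in total degree $13$ as $t^{-3}\lambda_1\mu_1$ and $t\lambda_1\mu_1^3$; identifies the two classes gained upon restricting to the second quadrant as coming from differentials on $2t^{-7}$ and $t^{-6}\lambda_1 u_2$, with $d_{24}(2t^{-7})=t^5\lambda_1\mu_1^5$; and then compares $d_5(t^{-2}\lambda_1\mu_1^2 u_2)=2t\lambda_1\mu_1^3$ integrally with $d_{13}(t^{-3}\lambda_1\mu_1^2 u_2)=t^5\lambda_1\mu_1^5$ mod $2$ to force the extension linking $t\lambda_1\mu_1^3$ to $t^5\lambda_1\mu_1^5$, and hence a $\bZ/8$ generated by $t^{-3}\lambda_1\mu_1$. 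This specificity is not optional: with four $E_\infty$ classes of order $2$, the abutment could a priori assemble as $\bZ/4\oplus\bZ/4$ if the hidden extensions paired the classes differently, and your assertion that ``three of the classes'' chain into a $\bZ/8$ is exactly the point that needs a named witness class and a named pair of differentials. Supplying those identifications is the actual (conjectural) content of the paper's sketch, and it is absent from yours.
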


The argument in support of this conjecture is similar to the one for Conjecture \ref{c:TR9}.

\bibliographystyle{plain}
\bibliography{b}

\end{document}